\def\url@leostyle{%
  \@ifundefined{selectfont}{\def\UrlFont{\sf}}{\def\UrlFont{\scriptsize\ttfamily}}}
\newtheorem{theorem}{Theorem}[section]
\newtheorem{corollary}[theorem]{Corollary} 
\newtheorem{proposition}[theorem]{Proposition} 
\newtheorem*{mainthm}{Main Theorem}
\theoremstyle{definition}
\newtheorem{definition}[theorem]{Definition}
\def\nnn{\mathbb{N}}
\def\rrr{\mathbb{R}}
\def\o{\mathcal{O}^{\cdot, D, \cdot}_{k, \cdot, v} (n)}
\def\a{\mathrm{Alex}^{\cdot, D, \cdot}_{k, \cdot, v} (n)}
\def\m{\mathcal{M}^{\cdot, D, \cdot}_{k, \cdot, v} (n)}
\def\on{\mathrm{O}(n)}
\def\meq{\mathcal{M}^{\mathrm{c}}_{\mathrm{eq}}}
\def\ali{^{\alpha}_i}
\def\pal{p^{\alpha}}
\def\pali{p^{\alpha}_i}
\def\ual{U^{\alpha}}
\def\uali{U^{\alpha}_i}
\def\cal{\tilde{U}^{\alpha}}
\def\cali{\tilde{U}^{\alpha}_i}
\DeclareMathOperator{\diam}{diam}
\DeclareMathOperator{\vol}{vol}
\def\dist{\textrm{dist}}
\def\embed{\hookrightarrow}
\def\co{\colon\thinspace}
\begin{document}

\title[Orbifold finiteness]{Orbifold finiteness under geometric and spectral constraints}

\author[Harvey]{John Harvey}
\address{University of Notre Dame, Department of Mathematics, Notre Dame, Ind. 46556, U.S.A.}
\email{jharvey2@nd.edu}

\subjclass[2010]{Primary: 53C23; Secondary: 53C20,57R18, 58J53} 

\date{\today}

\begin{abstract}
The class of Riemannian orbifolds of dimension $n$ defined by a lower bound on the sectional curvature and the volume and an upper bound on the diameter has only finitely many members up to orbifold homeomorphism. Furthermore, any class of isospectral Riemannian orbifolds with a lower bound on the sectional curvature is finite up to orbifold homeomorphism.
\end{abstract}

\maketitle

\section{Introduction}

The question of how the geometry of a Riemannian manifold controls its topology is of long-standing interest. One particular problem is finding geometric constraints to define a  class of manifolds which is finite up to homotopy, homeomorphism or diffeomorphism.

A convenient notation for these classes is to write $\mathcal{M}^{K, D, V}_{k, d, v} (n)$ for the class of all Riemannian manifolds $(M,g)$ with $k \leq \sec_g \leq K$, $d \leq \diam (M) \leq D$ and $v \leq \vol (M) \leq V$. Where a value is replaced with ``$\cdot$'' the condition is understood to be deleted.

The first such result is that of Weinstein, who showed that, for $\delta > 0$, $\mathcal{M}^{1, \cdot, \cdot}_{\delta, \cdot, \cdot} (2n)$, the class of unformly pinched positively curved manifolds of even dimension, has only finitely many members up to homotopy \cite{weinstein}.
Shortly after this, Cheeger showed that $\mathcal{M}^{K, D, \cdot}_{k, \cdot, v} (n)$ has finitely many members up to diffeomorphism  \cite{cheeger}. 

Grove and Petersen removed the upper bound on sectional curvature, and obtained finiteness of $\m$ up to homotopy \cite{gphomotopy}.
Shortly afterwards, in collaboration with Wu, this result was improved to show finiteness up to homeomorphism \cite{gpw}. 
As long as the dimension is not four, the work of Kirby and Siebenmann \cite{ks} implies finiteness up to diffeomorphism.

The homeomorphism finiteness result was generalized to Alexandrov geometry by Perelman with his Stability Theorem \cite{perstab}, which showed that $\a$, the corresponding class of Alexandrov spaces, is finite up to homeomorphism.

The present work generalizes the homeomorphism finiteness result of Grove, Petersen and Wu to the area of Riemannian orbifolds. An orbifold is a mild generalization of a manifold, and, to give just a few examples, the concept has found applications in Thurston's work on the Geometrization Conjecture \cite{thurston}, the construction of a new positively curved manifold by Dearricott \cite{Dear} and Grove--Verdiani--Ziller \cite{gvz}, and string theory, such as Dixon, Harvey, Vafa and Witten's conformal field theory built on a quotient of a torus \cite{dhvw}. The same convenient notation can be used for orbifolds, here replacing $\mathcal{M}$ with $\mathcal{O}$.

The first finiteness result for orbifolds is that of Fukaya \cite{fukaya}, who generalized the result of Cheeger, showing that a subclass of $\mathcal{O}^{K, D, \cdot}_{k, \cdot, v} (n)$ is finite up to orbifold diffeomorphism. 
Fukaya used a much more restrictive definition of orbifold, considering only the orbit spaces of global actions by finite groups on Riemannian manifolds. 
This corresponds to what Thurston called a ``good'' orbifold \cite{thurston}.

Working in dimension two, Proctor and Stanhope showed that $\mathcal{O}^{\cdot, D, \cdot}_{k, \cdot, v} (2)$ is finite up to orbifold diffeomorphism \cite{procstan}, providing a first generalization of the result of Grove, Petersen and Wu.
The homeomorphism finiteness result was then shown in all dimensions by Proctor, provided the orbifold has only isolated singularities \cite{proc}. 
Here the assumption that the only singularities are isolated is removed.

	\begin{mainthm}
	For any $k, D, v, n$, the class $\o$ has only finitely many members up to orbifold homeomorphism.
	\end{mainthm}

This result completes the generalization of Grove--Petersen--Wu's homeomorphism finiteness.

By Weyl's asymptotic formula, which Farsi has shown is valid for orbifolds \cite{farsi}, a Laplace isospectral class of orbifolds has fixed volume and dimension. Stanhope has shown that, in the presence of a lower bound on Ricci curvature, such a class has a uniform upper bound on its diameter \cite{stan}, and so, just as in \cite{proc}, the following corollary is clear.

\begin{corollary}
Any class of Laplace isospectral orbifolds with a uniform lower bound on its sectional curvature has only finitely many members up to orbifold homeomorphism.
\end{corollary}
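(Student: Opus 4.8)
The plan is to deduce the corollary from the Main Theorem by using the spectral hypothesis to manufacture the geometric bounds that the Main Theorem requires; this is exactly the reduction carried out for manifolds with isolated orbifold singularities in \cite{proc}. Let $\mathcal{C}$ be a class of Laplace isospectral Riemannian orbifolds all of which satisfy $\sec\ge k$ for one fixed $k$. First I would invoke Farsi's extension of Weyl's asymptotic formula to orbifolds \cite{farsi}: the leading term of the eigenvalue-counting function has the shape $c_n\,\vol(\mathcal{O})\,\lambda^{n/2}$, so isospectrality forces all members of $\mathcal{C}$ to share one dimension $n$ and one volume $V$. In particular every $\mathcal{O}\in\mathcal{C}$ has $\vol(\mathcal{O})=V\ge V$, giving the lower volume bound for free.

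Next, a uniform lower bound $\sec\ge k$ gives the uniform bound $\mathrm{Ric}\ge(n-1)k$, which is precisely the hypothesis of Stanhope's theorem \cite{stan}; applied to the isospectral class $\mathcal{C}$ it yields a uniform upper diameter bound $\diam(\mathcal{O})\le D$, with $D$ depending only on $n$, $k$ and $V$. Hence $\mathcal{C}$ is contained in the class $\o$ once we take the volume parameter there to be $v=V$, i.e.\ each orbifold in $\mathcal{C}$ satisfies exactly the hypotheses of the Main Theorem for the parameters $k$, $D$, $V=v$, $n$ just identified. The Main Theorem then says $\o$ has only finitely many members up to orbifold homeomorphism, so the same is true of the subclass $\mathcal{C}$, which is what is to be proved.

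The main obstacle here is just the Main Theorem itself; everything else is a routine assembly of the quoted results. The only points I would want to verify explicitly are that the Ricci bound produced by $\sec\ge k$ is in the form Stanhope assumes, and that the common volume extracted from Weyl--Farsi does supply the lower volume bound built into the definition of $\o$ — both of which are immediate — together with the harmless observation that a finiteness statement is vacuous for an empty or one-element class.
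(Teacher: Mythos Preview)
Your argument is correct and matches the paper's own reasoning essentially line for line: Weyl--Farsi fixes dimension and volume, the sectional bound gives the Ricci bound needed for Stanhope's diameter estimate, and then the Main Theorem applies. The only cosmetic point is that Stanhope's $D$ depends on the common spectrum and the Ricci lower bound rather than on $V$ and $k$ per se, but this does not affect the proof.
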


This generalizes the similar result of Brooks, Perry and Petersen for Laplace isospectral manifolds \cite{bpp}. While one cannot hear the shape of an orbifold, one can, at least in the presence of a lower sectional curvature bound, know that there are only finitely many possibilities.

\section{Gromov--Hausdorff topologies}

A general approach for proving finiteness results such as the Main Theorem \cite{gpw,perstab,proc} is to proceed via a compactness or precompactness result for the class. 
A particularly useful topology (in fact, a metric) on the set of isometry classes compact metric spaces was proposed by Gromov \cite{gromov}. 
Gromov's metric generalizes the Hausdorff metric $H$ on the closed subsets of a compact metric space.

\begin{definition}Let $(X,d_X)$ and $(Y,d_Y)$ be metric spaces. A function $f \co X \to Y$ (not necessarily continuous) is called an Gromov--Hausdorff $\epsilon$--approximation if, for all $p,q \in X$, $\left| d_X(p,q) - d_Y(f(p),f(q)) \right| \leq \epsilon$ and an $\epsilon$--neighborhood of the image of $f$ covers all of $Y$.\end{definition}

\begin{definition}The \emph{Gromov--Hausdorff distance} between two compact metric spaces $(X, d_X)$ and $(Y, d_Y)$ is the infimum of the set of all $\epsilon$ such that there are Gromov--Hausdorff $\epsilon$--approximations $X \to Y$ and $Y \to X$.\end{definition}

The equivariant Gromov--Hausdorff topology was first defined by Fukaya \cite{fukaya}, and achieved its final form some years later in his work with Yamaguchi \cite{fyannals}. Consider the set of ordered pairs $(M, \Gamma)$ where $M$ is a compact metric space and $\Gamma$ is a closed group of isometries of $M$. Say that two pairs are equivalent if they are equivariantly isometric up to an automorphism of the group.
Let $\mathcal{M}^c_{eq}$ be the set of equivalence classes of such pairs.

\begin{definition}Let $(X,\Gamma),(Y, \Lambda) \in \meq$. An \emph{equivariant Gromov--Hausdorff $\epsilon$--approximation} is a triple $(f,\phi,\psi)$ of functions $f \co X \to Y$, $\phi \co \Gamma \to \Lambda$ and $\psi \co \Lambda \to \Gamma$ such that
	\begin{enumerate}
		\item $f$ is an Gromov--Hausdorff $\epsilon$--approximation;
		\item if $\gamma \in \Gamma, x \in X$, then $\dist(f(\gamma x) , \phi(\gamma) f(x) ) < \epsilon$; and
		\item if $\lambda \in \Lambda, x \in X$, then $\dist(f(\psi(\lambda) x) , \lambda f(x) ) < \epsilon$.
	\end{enumerate}
\end{definition}

Note that these functions need not be morphisms from the relevant category. The equivariant Gromov--Hausdorff distance is defined from these approximations just as with the standard Gromov--Hausdorff distance.

Convergence of non-compact spaces can also be defined by adding a basepoint. Such sequences are said to converge if the closed metric balls around the basepoint converge. Where equivariant  convergence of non-compact spaces is considered in the present work, the basepoint will always be fixed by the group. In this case, convergence also reduces to the convergence of closed balls.

By \cite[Proposition 3.6]{fyannals}, given a sequence in $\meq$, if the sequence of underlying metric spaces converges in the Gromov--Hausdorff topology to a compact metric space then there is a subsequence which converges in the equivariant Gromov--Hausdorff topology.

\section{Alexandrov geometry}

Certain curvature conditions define precompact subsets of the set of all compact metric spaces. 
For example, Gromov showed that the class of all Riemannian manifolds of dimension $n$, with diameter less than $D$, and with Ricci curvature greater than $(n-1)k$ is precompact \cite{gromov}. 
Strengthening the curvature condition to require a lower bound on the sectional curvature provides much more structure on the limit spaces, and it is in this context that Alexandrov geometry was first studied.

It is possible to show that, for a Riemannian manifold, the condition that sectional curvature be $\geq k$ can be expressed as a triangle-comparison condition.
Grove and Petersen showed \cite{gpjdg} that the closure of $\m$ is contained within the class of all complete length metric spaces satisfying this triangle-comparison condition.
It is natural, then, to study this class in its own right.

\begin{definition}
An \textit{Alexandrov space} of finite dimension $n\geq 1$ is a locally complete, locally compact, connected  length space, with a lower curvature bound in the triangle-comparison sense. By convention, a $0$--dimensional Alexandrov space is either a one-point or a two-point space.
\end{definition}

Many fundamental results in this area were proved by Burago, Gromov and Perelman \cite{bgp}. 
They showed that the class of all Alexandrov spaces is closed under passing to Gromov--Hausdorff limits, and under quotients by isometric group actions. 

Let $X$ be an Alexandrov space, and let $p \in X$. 
Then, also by \cite{bgp}, there is a uniquely defined tangent cone at $p$, $T_p X$, which can be obtained as a limit object by rescaling $X$ around $p$.
$T_p X$ is itself an Alexandrov space, with curvature $\geq 0$.

The most important singularities of an Alexandrov space are its extremal subsets, introducted by Perelman and Petrunin \cite{perpet}.
The distance functions in an Alexandrov space have well-defined gradients, and it is possible to flow along these gradients. 
The gradient flow gives a natural way to understand an extremal subset.

\begin{definition}Let $X$ be an Alexandrov space. A subset $E \subset X$ is extremal if, for every $p \in X$, the flow along the gradient of $\dist(p,\cdot)$ preserves $E$.\end{definition}

Trivial examples of extremal sets are the empty set, and the entire space $X$. Any point having a space of directions with diameter $\leq \pi/2$ is extremal, as is the boundary of an Alexandrov space. Where a compact Lie group acts on an Alexandrov space by isometries, the closure of the orbit-type strata in the orbit space are also extremal sets \cite{perpet}, an example of particular interest for the topic under discussion.

Extremal sets survive the passage to Gromov--Hausdorff limits, and so for any extremal set $E$, and any point $p \in E$, there is a well defined tangent subcone $T_p E \subset T_p X$ which is also extremal. Conversely, if $E$ is a closed subset of $X$ such that $T_p E$ is extremal for each $p \in E$, then $E$ is an extremal subset.

A crucial advance in the understanding of Alexandrov spaces was made by Perelman with his proof of the stability theorem \cite{perstab}. 
The author recommends the treatment by Kapovitch \cite{kapstab} for those who wish to learn more about this deep result. 
The statement of the theorem given here is a relative version of Perelman's original theorem. 
It was proved by Kapovitch for the case where only one extremal subset is under consideration, but as shown by Searle and the author \cite{HS}, it is in fact true in greater generality.

\begin{theorem}[Stability Theorem \cite{perstab, kapstab,HS}]\label{t:relstability}
	Let $X_i$ be a sequence of Alexandrov spaces of dimension $n$ with curvature uniformly bounded from below, converging to an Alexandrov space $X$ of the same dimension. 
	Let $\mathcal{E}_i = \{ E^{\alpha}_i \subset X_i \}_{\alpha \in A}$ be a family of extremal sets in $X_i$ indexed by a set $A$,  converging to a family of extremal sets $\mathcal{E}$ in $X$.
	
	Let $o(i) \co \nnn \to (0,\infty)$ be a function with $\lim_{i \to \infty} o(i) = 0$. Let $\theta_i\co  X \to X_i$ be a sequence of Gromov--Hausdorff $o(i)$--approximations. 
	
	Then for all large $i$ there exist homeomorphisms $\theta'_i \co  (X, \mathcal{E}) \to (X_i, \mathcal{E}_i)$, $o(i)$--close to $\theta_i$.
\end{theorem}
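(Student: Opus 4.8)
The plan is to follow the architecture of Perelman's original argument \cite{perstab}, in the streamlined form presented by Kapovitch \cite{kapstab}, running an induction on the dimension $n$ and carrying the whole family $\mathcal{E}$ of extremal sets through every stage of the induction; this is the strategy of Searle and the author \cite{HS}, the single-set case being Kapovitch's. Two preliminary reductions are convenient. First, on any small ball of $X$ only finitely many of the $E^{\alpha}$ are relevant --- a compact Alexandrov space has only finitely many extremal subsets --- so the index set may be treated as finite. Second, it suffices to prove a local statement: a global homeomorphism of pairs $\theta_i' \co (X, \mathcal{E}) \to (X_i, \mathcal{E}_i)$ that is $o(i)$--close to $\theta_i$ can be assembled from homeomorphisms defined over the members of a fixed finite cover of $X$ by small balls, provided the local pieces are $C^0$--close to $\theta_i$ and each carries the trace of every $E^{\alpha}$ onto the corresponding trace of $E^{\alpha}_i$.

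For the local models I would use that, for $p \in X$, a sufficiently small metric ball around $p$ is pointed-homeomorphic to a neighbourhood of the apex of the Euclidean cone over the space of directions $\Sigma_p X$ (Perelman's conical neighbourhood theorem, itself part of this induction), and that the homeomorphism can be taken to carry each $E^{\alpha}$ containing $p$ onto the subcone over $\Sigma_p E^{\alpha}$, since extremality passes to spaces of directions \cite{perpet}. Because $\dim \Sigma_p X = n-1$, the inductive hypothesis applies to $\Sigma_p X$ together with the family $\{\Sigma_p E^{\alpha}\}_{\alpha}$, and coning off a stability homeomorphism for the space of directions produces one for the ball, compatibly with $\mathcal{E}$. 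Near ``regular'' points, where a maximal collection of strainers exhibits a Euclidean factor, one splits that factor off and applies the same analysis to the transverse slice.

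The mechanism that upgrades these pointwise models to honest homeomorphisms is Perelman's fibration theorem: an ``admissible'' map into Euclidean space, or into a product of cones, built from distance functions to carefully chosen points is a topological submersion --- indeed locally a fibre bundle whose fibre is a cone over a space of directions --- and this bundle structure is stable under Gromov--Hausdorff approximation, so that for large $i$ the space $X_i$ carries a matching bundle chart obtained by composing with $\theta_i$. The new requirement is that the admissible map attached to each ball of the cover be simultaneously adapted to every $E^{\alpha}$ meeting that ball, so that each such $E^{\alpha}$ is the preimage of a coordinate subspace; the resulting charts, and the transition data between them, then respect $\mathcal{E}$ and $\mathcal{E}_i$.

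The real difficulty is the relative bookkeeping in the gluing step. Producing a single $\theta_i'$ that is a homeomorphism of pairs for the whole family $\mathcal{E}$ at once forces one to run the patching not merely over the finite cover of $X$ but over the poset of the extremal sets and all of their intersections, and to check that the homeomorphism-extension and local-contractibility inputs underlying Perelman's gluing remain valid for stratified spaces and stratum-preserving homeomorphisms, so that a stratified piece can always be adjusted by a small stratified isotopy to agree with an already-glued piece on an overlap. This stratified gluing is exactly what \cite{HS} adds to the single-set argument, and I expect it to be where essentially all of the work lies; the conical local structure and the fibration theorem, by contrast, are already available from \cite{bgp, perpet, perstab, kapstab}.
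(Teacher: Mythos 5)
The paper states this theorem as a cited background result \cite{perstab, kapstab, HS} and does not prove it, so there is no internal proof to compare your sketch against. As an outline of the argument from those references, your sketch captures the architecture correctly --- induction on dimension, local conical models, Perelman's fibration theorem for admissible maps, and a gluing step --- and you rightly identify the stratified gluing over the poset of extremal sets as the content of the Harvey--Searle extension beyond Kapovitch's single-set case.

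One caution about the inductive step as you have phrased it. You propose to apply the inductive hypothesis directly to $\Sigma_p X$ with the family $\{\Sigma_p E^{\alpha}\}$ and then ``cone off'' the resulting homeomorphism. For that hypothesis to bear on the convergent sequence $X_i \to X$, one would need $\Sigma_{p_i} X_i \to \Sigma_p X$ for $p_i \to p$, and this can fail: spaces of directions are only semicontinuous under non-collapsed Gromov--Hausdorff convergence. For instance, a sequence of smooth convex surfaces converging to a cone has $\Sigma_{p_i} X_i$ a circle of length $2\pi$ at every point, while $\Sigma_p X$ at the apex is strictly shorter. The actual inductive mechanism in \cite{perstab, kapstab} descends in dimension through the fibers of admissible strainer maps rather than through spaces of directions directly, and the conical neighbourhood theorem is an \emph{output} of this machinery rather than an independently available input --- it is proved simultaneously with the stability and fibration statements at each dimension. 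Your sketch is faithful in spirit, and you are right that the stratified bookkeeping in the gluing is where \cite{HS} adds new content, but the precise order and interdependence of the steps in the induction is where the real technical weight sits, and the outline as written makes that induction look more direct than it is.
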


This result implies all the previously known finiteness results for manifolds, other than Cheeger diffeofiniteness in dimension four.
It also has a vital application in Alexandrov geometry.
Consider the construction of the tangent cone to an Alexandrov space by the convergence of the sequence obtained by rescaling the metric around a certain point.
By Theorem \ref{t:relstability}, the local structure of the space is controlled by the tangent cone.

\begin{corollary}Let $X$ be an Alexandrov space, and let $p \in X$. Then for some $r_0 >0$, $B_r(p) \cong T_p X$ for all $r < r_0$. Furthermore, $r_0$ and the homeomorphism can be chosen so that, for every extremal set $E$, $E \cap B_{r_0}(p)$ is mapped to $T_p E$.\end{corollary}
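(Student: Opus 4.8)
The plan is to deduce this local statement directly from the Stability Theorem (Theorem \ref{t:relstability}) by taking the sequence of rescalings of $X$ around $p$ as the converging sequence. Concretely, set $\lambda_i \to \infty$ and let $X_i = (\lambda_i X, p)$ be the pointed rescaled spaces; by the construction of the tangent cone recalled above, $X_i$ converges in the pointed Gromov--Hausdorff topology to $T_p X$, and since each $X_i$ is isometric to $X$ (up to scaling) all the spaces have the same dimension $n$ and a uniform lower curvature bound (namely $\geq 0$ eventually, since the bound rescales to $k/\lambda_i^2 \to 0$). This is a convergence of \emph{non-compact} pointed spaces, but as noted in Section 2 this reduces to convergence of closed metric balls around the basepoint, and $p$ is fixed throughout, so the relative Stability Theorem applies to the balls $\bar B_R(p) \subset X_i$ for any fixed radius $R$.

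The next step is to track the extremal sets. For a fixed extremal set $E \subset X$, its image in $X_i = \lambda_i X$ is again extremal (rescaling is an isometry, which preserves the triangle-comparison condition and hence extremality), and the family $\{E \subset X_i\}$ converges to $T_p E \subset T_p X$ — this is exactly the statement recalled just before the corollary, that extremal sets survive Gromov--Hausdorff limits and the limit of the rescaled copies of $E$ is the tangent subcone $T_p E$. One must be slightly careful that there may be infinitely many extremal sets, but the Stability Theorem as stated allows an arbitrary index set $A$, so we may run it simultaneously over all extremal sets of $X$ (equivalently, since extremality is detected locally near $p$, over all the distinct subcones $T_p E$). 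Applying Theorem \ref{t:relstability} with $\theta_i$ any sequence of Gromov--Hausdorff approximations $T_p X \to X_i$ with errors $o(i) \to 0$, we obtain, for all large $i$, homeomorphisms $\theta'_i \co (T_p X, \{T_p E\}) \to (X_i, \{E\})$ that are $o(i)$--close to $\theta_i$.

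Finally I would unwind the rescaling. Fix one such large $i_0$ and let $\lambda = \lambda_{i_0}$; then $X_{i_0}$ restricted to the ball of radius $R$ about $p$ is, as a metric space, $\lambda \cdot \bar B_{R/\lambda}(p) \subset \lambda X$, so $\bar B_{R/\lambda}(p) \subset X$ is homeomorphic to the ball of radius $R$ in $T_p X$, which (since $T_p X$ is a metric cone) is homeomorphic to $T_p X$ itself via radial scaling. Setting $r_0 = R/\lambda$ and composing with this radial homeomorphism of the cone gives $B_r(p) \cong T_p X$ for all $r \le r_0$ (the smaller balls being handled by the same cone-scaling), and the homeomorphism carries $E \cap B_{r_0}(p)$ to the corresponding scaled copy of $T_p E$, hence to $T_p E$ after the cone rescaling, since $T_p E$ is itself a subcone and therefore scale-invariant up to homeomorphism.

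The main obstacle I anticipate is not any single hard estimate but the careful bookkeeping of the three compatible identifications — pointed GH convergence of the rescalings to $T_p X$, the simultaneous convergence of the (possibly infinite) family of rescaled extremal sets to the family of subcones, and the passage back through the rescaling to a genuine neighborhood of $p$ in $X$ — together with checking that ``for all $r < r_0$'' follows from the single scale $r_0$ by exploiting the cone structure, rather than needing to re-run stability at every radius. The hypotheses of Theorem \ref{t:relstability} (equal dimension in the limit, uniform lower curvature bound, convergence of the extremal families) must each be verified for the rescaled sequence, but all three are immediate consequences of the facts about tangent cones and extremal sets recalled earlier in this section.
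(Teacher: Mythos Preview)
Your proposal is correct and follows exactly the approach the paper intends: the paper does not give an explicit proof of this corollary, but states it as an immediate consequence of applying Theorem~\ref{t:relstability} to the rescaling sequence $\lambda_i X \to T_p X$, which is precisely what you do. Your write-up is simply a detailed unpacking of that one-sentence derivation, including the bookkeeping for the extremal sets and the passage back through the rescaling; the only minor wrinkle is that the ``for all $r<r_0$'' clause is more cleanly obtained by noting that stability holds for \emph{all} sufficiently large rescaling factors (hence all sufficiently small radii) rather than by invoking cone-scaling after fixing a single $i_0$, but this does not affect correctness.
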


These small conical neighborhoods are extremely useful in the study of Alexandrov spaces, and so it will be convenient to make the following definition.

\begin{definition}
An open subset $U$ of an Alexandrov space $X$ is called \emph{cone-like around $p$} if $p \in U$, and there is a homeomorphism $f \co U \to T_p X$ with $f(p)$ being the vertex of the cone and $f(E \cap U) = T_p E$ for each extremal set $E$.
\end{definition}

Finally, the following result on equivariant convergence of Alexandrov spaces, which is due to the author, will be central in constructing orbifold category homeomorphisms. 
The original result is for equicontinuous sequences of actions.
This hypothesis is always satisfied for a finite group.

\begin{theorem}\cite{hequi}\label{t:equistab} 
	Let $G$ be a finite Lie group and let $(X_i,p_i)$ be a sequence of pointed Alexandrov spaces of dimension $n$ and curvature bounded below by $k$. Let $G$ act isometrically on each of $X_i$, fixing $p_i$. Suppose the sequence converges to an action of $\Gamma$ on another $n$-dimensional pointed Alexandrov space $(X,p)$ in the pointed equivariant Gromov--Hausdorff topology. 
	
	Then for large $i$ the spaces $X_{i}$ are equivariantly homeomorphic to $X$.
\end{theorem}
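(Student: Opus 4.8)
The plan is to deduce the theorem from the relative Stability Theorem (Theorem~\ref{t:relstability}) applied downstairs to the orbit spaces, and then to lift the resulting homeomorphism to an equivariant one by a local, stratum-by-stratum argument, using induction on the dimension $n$ for the lower-dimensional links.

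First I would clear away some preliminaries. Since $\dim X_i = \dim X = n$ there is no collapse, and because $G$ is finite a short argument --- using that an isometry of an Alexandrov space fixing a point and acting trivially on its space of directions is the identity, together with the absence of small subgroups --- shows that for large $i$ the structural homomorphism $G \to \Gamma$ is an isomorphism; after passing to a subsequence I identify $\Gamma$ with $G$ and regard $G$ as acting isometrically on $(X,p)$, fixing $p$, with $(X_i,p_i) \to (X,p)$ equivariantly. By \cite{bgp} the orbit spaces $Y := X/G$ and $Y_i := X_i/G$ are $n$--dimensional Alexandrov spaces of curvature $\geq k$; the quotient maps $\pi$, $\pi_i$ are submetries, the equivariant convergence descends to $Y_i \to Y$ compatibly with the Gromov--Hausdorff approximations, and for each subgroup $H \leq G$ the closures $E_H \subset Y$ and $E^i_H \subset Y_i$ of the loci of orbits with isotropy conjugate to $H$ are extremal sets \cite{perpet}. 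Upper semicontinuity of isotropy shows the finite families $\{E^i_H\}$ converge to $\{E_H\}$, so the relative Stability Theorem yields, for large $i$, homeomorphisms $\bar h_i \co Y \to Y_i$ carrying each $E_H$ to $E^i_H$, respecting the stratification by isotropy type, and close to the descended approximation.

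The substance of the argument is lifting $\bar h_i$ to a $G$--equivariant homeomorphism $h_i \co X \to X_i$. I would proceed by induction on $n$, and for fixed $n$ build $h_i$ by working inward over the finitely many isotropy strata of $Y$. Over the open dense union of principal orbits, $\pi$ is a $|G|$--sheeted covering; since $\bar h_i$ is close to the equivariant approximation it respects the deck-transformation data, so it lifts there to a $G$--equivariant homeomorphism. To extend across the next stratum I would cover it by cone-like sets $\bar U \subset Y$, cone-like around points $\bar q$ whose fibre $\pi^{-1}(\bar q)$ is a single orbit $Gq$, so that $\pi^{-1}(\bar U)$ is modelled on $G \times_{G_q} T_q X$ with $T_q X$ the cone on the $(n-1)$--dimensional link $\Sigma_q X$ carrying an isometric $G_q$--action fixing the vertex; likewise $\pi_i^{-1}(\bar h_i\bar U)$ is modelled on $G \times_{G_q} T_{q_i} X_i$ for a lift $q_i \to q$. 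Applying the relative form of the inductive hypothesis in dimension $n-1$ to a convergent subsequence of the links gives $G_q$--equivariant homeomorphisms $\Sigma_q X \to \Sigma_{q_i} X_i$ matching extremal sets; coning off and inducing up over $Gq$ produces a $G$--equivariant model isomorphism $\pi^{-1}(\bar U) \to \pi_i^{-1}(\bar h_i\bar U)$ covering $\bar h_i|_{\bar U}$, which I would splice onto the lift already defined over the higher strata by Perelman's controlled gluing, carried out $G_q$--equivariantly. Iterating inward yields $h_i$; the base case $n \leq 1$ is immediate.

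The hard part will be the two places where equivariance meets the technical heart of the Stability Theorem. The first is establishing the equivariant local models: one must show that a cone-like set downstairs pulls back to a $G_q$--equivariant cone structure $\pi^{-1}(\bar U) \cong G \times_{G_q} T_q X$ respecting the relevant extremal sets, and similarly upstairs in $X_i$ --- an equivariant refinement of the local conicality corollary, in which the $G_q$--action on the lower-dimensional link and the inductive hypothesis do the real work, and in which one must take care that the links actually converge equivariantly to $\Sigma_q X$ along the chosen subsequence. The second is checking that Perelman's gluing --- whose corrections rest on general-position and controlled-isotopy arguments --- can be performed $G_q$--equivariantly on each orbit of cone-like pieces, with corrections on distinct pieces of an orbit keeping disjoint support once the cover is fine enough relative to the action, and with every correction respecting the extremal-set data inherited from the quotient. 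Everything else --- the descent to orbit spaces, the identification of the isotropy strata as extremal sets, and the covering-space lift over the principal part --- is routine by comparison.
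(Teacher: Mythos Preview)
This result is quoted from \cite{hequi} and is not proved in the present paper; only a short outline of the argument is recorded after the statement. Your proposal and that outline share the same architecture --- pass to the orbit spaces $X_i/G \to X/G$, obtain a stratum-preserving homeomorphism there, and lift it equivariantly --- but the technical setups differ. The key device in the paper's sketch, which you do not invoke, is to realize the entire convergent sequence as $G$--invariant subspaces of a single \emph{enveloping metric space} $\mathcal{X}$ carrying an isometric $G$--action, so that pointed equivariant Gromov--Hausdorff convergence becomes honest Hausdorff convergence inside $\mathcal{X}$ and the orbit spaces converge inside $\mathcal{X}/G$; the lift of a downstairs homeomorphism $f_i \co X/G \to X_i/G$ to an equivariant $F_i \co X \to X_i$ is then carried out in that ambient setting. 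Your route stays with the abstract equivariant GH framework and attacks the lift directly by a stratum-by-stratum induction on the dimension of links, together with an equivariant version of Perelman's gluing. That is a coherent strategy, and you are candid that the equivariant local cone models and the $G_q$--equivariant gluing are the genuinely hard steps; but it amounts to rebuilding an equivariant stability theorem essentially from scratch, whereas the enveloping-space reformulation is designed precisely to let the ordinary stability machinery do the work downstairs and reduce the lift to a more uniform problem inside a single $G$--space.
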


In the proof of that theorem, after passing to a subsequence, the convergent sequence is reformulated as a Hausdorff convergent sequence of invariant subspaces of an enveloping metric space $\mathcal{X}$ with an isometric action of $G$.
Then the orbit spaces converge in the Hausdorff sense inside $\mathcal{X}/G$.
If $f_i \co X/G \to X_i/G$ are Hausdorff approximations which are also homeomorphisms, then they can be lifted to equivariant homeomorphisms $F_i \co X \to X_i$.
This provides some flexibility in the construction of the $F_i$, since there is usually some freedom in the precise choice of $f_i$.

\section{Orbifolds}\label{ss:orbifolds}

Orbifolds were first introduced by Satake under the name \emph{V-manifolds} \cite{Satake}, as topological spaces locally modelled on a quotient of Euclidean space by a finite group. Some basic facts about orbifolds are reviewed here. The reader may refer to, among others, the book by Adem, Leida and Ruan \cite{alr} or Thurston's notes \cite{thurston} for further information.

\begin{definition}A smooth $n$--dimensional \emph{orbifold chart} over a topological space $U$ is a triple $(\tilde{U}, \Gamma_U, \pi_U)$ such that $\tilde{U}$ is a connected open subset of $\rrr^n$, $\Gamma_U$ is a finite group of smooth automorphisms of $\tilde{U}$ and $\pi_U \co \tilde{U} \to U$ is a $\Gamma_U$--invariant map inducing a homeomorphism $\tilde{U}/\Gamma_U \cong U$.\end{definition}

For convenience, a chart will sometimes be referred to as being over a point $p$. 
This will mean that the chart is over some neighborhood of $p$.

Let $U$ and $V$ be open subsets of a topological space $X$, and let $(\tilde{U}, \Gamma_U, \pi_U)$ and  $(\tilde{V}, \Gamma_V, \pi_V)$ be orbifold charts of dimension $n$ over $U$ and $V$ respectively.
The charts are called \emph{compatible} if, for every $p \in U \cap V$, there is a neighborhood $W$ of $p$ and an orbifold chart $(\tilde{W}, \Gamma_W, \pi_W)$ over $W$ such that there are smooth embeddings $\lambda_U \co \tilde{W} \embed \tilde{U}$ and $\lambda_V \co \tilde{W} \embed \tilde{V}$ with $\pi_V \circ \lambda_V = \pi_W$ and $\pi_U \circ \lambda_U = \pi_W$.

As usual, an orbifold atlas on a space $X$ will mean a collection of compatible charts covering $X$. Now the definition of an orbifold can be made.

\begin{definition}A smooth \emph{orbifold} of dimension $n$ is a paracompact Hausdorff space equipped with an atlas of orbifold charts of dimension $n$.\end{definition}

An orbifold diffeomorphism (respectively homeomorphism) is a homeomorphism of the underlying topological space which can locally be lifted to an equivariant diffeomorphism (respectively homeomorphism) of charts. 

Let $X$ be an orbifold, let $p \in X$, and let $(\tilde{U}, \Gamma, \pi)$ be a chart over $p$ with $\pi(y) = p$.
The isotropy group of $y$ will be called the \emph{local group} at $p$, and will be written as $\Gamma_p$. 
It is uniquely defined up to conjugacy in $\Gamma$, and choosing a different chart does not change the isomorphism type of the group. 

In fact, one can always choose a linear chart over $p$ such that the group of automorphisms is isomorphic to $\Gamma_p$. By this is meant a chart of the form $(\rrr^n, \Gamma_p, \pi)$ where the action of $\Gamma_p$ is via a faithful orthogonal representation $\rho_p \co \Gamma_p \embed \on$. Such a chart will be referred to as a \emph{linear chart around $p$}. The representation is also uniquely determined up to isomorphism, and will be called the \emph{local action} at $p$. The differential of the action of $\Gamma_p$ at the origin of the chart is also isomorphic to $\rho_p$.

A \emph{Riemannian metric} on an orbifold can be given by fixing a finite atlas and a partition of unity with respect to the corresponding cover, and choosing Riemannian metrics on the charts which are invariant with respect to the finite group action. An orbifold equipped with a Riemannian metric is called a Riemannian orbifold. Once the metric on the orbifold is given it can be lifted to the maximal atlas in a canonical manner. The various notions of curvature at points of an orbifold can then be defined by reference to the curvature of the charts.

It is straightforward to see that an orbifold with sectional curvature $\geq k$ is also an Alexandrov space with curvature $\geq k$.
The tangent cone at any point of an orbifold is then well-defined, and coincides with the usual notion of tangent space for orbifolds.
The notion of an extremal set now finds a very natural application in orbifolds.

\begin{proposition}\label{p:extremalstrata}Let $X$ be an orbifold of dimension $n$, $\Gamma$ a finite group, and $\rho \co \Gamma \embed \on$ a linear representation of $\Gamma$. Let $X^{\rho}$ be the closure of all points with local action $\rho$. Then $X^{\rho}$ is an extremal set of $X$.\end{proposition}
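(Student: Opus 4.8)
The plan is to combine two of the facts recorded above: that the closures of the orbit-type strata of an isometric action of a compact Lie group on an Alexandrov space are extremal in the orbit space \cite{perpet}, and that a closed subset of an Alexandrov space is extremal as soon as all of its tangent subcones are. Since $X^\rho$ is closed by definition, it suffices to fix $p \in X^\rho$ and to show that $T_p X^\rho$ is extremal in $T_p X$.

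First I would describe $X^\rho$ near $p$ in a linear chart. Let $(\rrr^n, \Gamma_p, \pi)$ be a linear chart around $p$ with $\pi(0)=p$, so that $\Gamma_p$ acts on $\rrr^n$ through an orthogonal representation $\rho_p$. For $x \in \rrr^n$ the point $\pi(x)$ has local group the stabiliser $(\Gamma_p)_x$ and, since the action is linear, local action the restriction $\rho_p|_{(\Gamma_p)_x}$. A one-line computation shows that conjugate stabilisers yield isomorphic isotropy representations: if $(\Gamma_p)_{gx} = g(\Gamma_p)_x g^{-1}$ then $\rho_p(g)$ intertwines $\rho_p|_{(\Gamma_p)_x}$ with $\rho_p|_{(\Gamma_p)_{gx}}$. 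Hence the set $S = \{\, x \in \rrr^n : ((\Gamma_p)_x,\, \rho_p|_{(\Gamma_p)_x}) \cong (\Gamma,\rho)\,\}$ is a finite union $\bigcup_{[H]} S_H$ of orbit-type strata of the $\Gamma_p$-action, and it is a cone, i.e.\ invariant under $x \mapsto tx$ for $t>0$, because $(\Gamma_p)_{tx} = (\Gamma_p)_x$. As $\pi$ is a closed map, $X^\rho$ meets the chart neighbourhood exactly in $\pi(\overline S) = \bigcup_{[H]} \pi(\overline{S_H})$, a finite union of closures of orbit-type strata in $\rrr^n/\Gamma_p$.

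Now I would pass to the tangent cone. By the discussion above $T_p X$ is the flat quotient $\rrr^n/\Gamma_p$ with $\Gamma_p$ acting through $\rho_p$; since $\overline S$ is a closed $\Gamma_p$-invariant cone, it is unmoved by the rescaling that produces this cone, so $T_p X^\rho$ is well defined and equals $\pi(\overline S) \subset \rrr^n/\Gamma_p$. Each $\pi(\overline{S_H})$ is the closure of an orbit-type stratum for the isometric $\Gamma_p$-action on flat $\rrr^n$, hence extremal in $\rrr^n/\Gamma_p$ by \cite{perpet}; and a finite union of extremal sets is extremal, since for every $q$ the gradient flow of $\dist(q,\cdot)$ preserves each member and therefore their union. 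Thus $T_p X^\rho$ is extremal in $T_p X$ for every $p \in X^\rho$, and the criterion recalled above yields that $X^\rho$ is extremal in $X$.

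The main obstacle is the bookkeeping of the second step rather than any deep point: one has to pin down the local groups and local actions of points lying inside a single chart, note that the isotropy representation depends only on the conjugacy class of the stabiliser, and verify that the resulting description of $X^\rho$ as a union of orbit-type strata closures is conical, so that it survives the blow-up to the tangent cone --- where the ambient metric flattens but the set $\overline S$ stays fixed. Granting that, the two quoted results close the argument.
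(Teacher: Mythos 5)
Your proof is correct, and it takes a genuinely different route to the key step. Both arguments share the same outer reduction: $X^\rho$ is closed by definition, so by the tangent-cone criterion it suffices to show that $T_p X^\rho$ is extremal in $T_p X$ for every $p \in X^\rho$. At that point the paper proceeds by induction on the dimension $n$: it identifies $T_p X$ with the cone on the spherical quotient $S^{n-1}/\Gamma_p$, applies the proposition in dimension $n-1$ to conclude that the relevant subset of $S^{n-1}/\Gamma_p$ is extremal, and then uses that the cone over an extremal set is extremal in the cone. You instead argue directly in dimension $n$: working in a linear chart $(\rrr^n,\Gamma_p,\pi)$, you observe that the set of points with local action $\rho$ is a conical finite union of orbit-type strata $\bigcup_{[H]} S_H$ of the isometric $\Gamma_p$-action on flat $\rrr^n$, so $T_p X^\rho = \bigcup_{[H]} \pi(\overline{S_H})$, and each $\pi(\overline{S_H})$ is extremal by the Perelman--Petrunin result that closures of orbit-type strata in the orbit space of an isometric compact Lie group action are extremal, with finite unions of (closed) extremal sets again extremal via the gradient-flow characterization. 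Your route avoids the induction entirely, together with the slightly delicate bookkeeping in the paper's inductive step of matching local actions of $X$ with those of $S^{n-1}/\Gamma_p$ (where one must strip off a trivial summand, using Maschke to ensure the match is well-defined); the price is a heavier reliance on the Perelman--Petrunin strata result, which the paper's proof only needs implicitly through the base case and the cone-over-extremal fact. Both arguments are sound, and yours has the merit of making fully explicit why $X^\rho$, in a chart, really is a union of closures of orbit-type strata and why this description is preserved under the rescaling that produces the tangent cone.
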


\begin{proof}The result is clear where $n = 1$. Let $p \in X^{\rho}$ and consider the local action at $p$ by $\Gamma_p$. The tangent cone at $p$ is the cone on the quotient of the unit sphere by $\Gamma_p$. Consider the image of those points in the unit sphere having isotropy isomorphic to $\rho$. The closure of the cone on this set is $T_p X^{\rho}$ and by induction it is extremal in $T_p X$.  Since $X^{\rho}$ is closed, it is extremal.\end{proof}

The following proposition now shows that a linear chart around $p$ can be extended over any cone-like set around $p$. 

\begin{proposition}\label{p:chart}Let $X$ be an orbifold, and let $p \in X$. Let $U$ be a cone-like set around $p$. Then there is a linear chart over $U$ around $p$.\end{proposition}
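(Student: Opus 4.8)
The plan is to construct the chart in the form $(\rrr^n,\Gamma_p,\pi)$, with $\Gamma_p$ acting through $\rho_p$, and to obtain $\pi$ by transporting the quotient projection of the tangent cone. Recall that $T_pX$ is isometric to $\rrr^n/\rho_p(\Gamma_p)$ with its flat cone metric; write $q\co\rrr^n\to T_pX$ for the quotient map. Since $U$ is cone-like around $p$ there is a homeomorphism $f\co U\to T_pX$ sending $p$ to the vertex and $E\cap U$ to $T_pE$ for every extremal set $E$, and $\pi:=f^{-1}\circ q$ is then a continuous $\Gamma_p$-invariant surjection inducing a homeomorphism $\rrr^n/\Gamma_p\cong U$. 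Thus $(\rrr^n,\Gamma_p,\pi)$ is already an orbifold chart over $U$ in the topological sense and is linear around $p$; the entire content of the proposition is that, for a suitable choice of $f$, it is \emph{smoothly} compatible with the maximal atlas of $X$. Near $p$ itself this is immediate: restricting the given linear chart around $p$ to a small ball exhibits a neighbourhood of $p$ in $U$ as $[\,B_\delta(0)/\Gamma_p\,]$, and one arranges $f$ to agree with this identification there. So the issue is compatibility at points $x\in U\setminus\{p\}$ together with the global assembly.

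Compatibility is a local condition, so fix $x\in U$, a preimage $\tilde x\in\pi^{-1}(x)$, and its isotropy $H=(\Gamma_p)_{\tilde x}\le\Gamma_p$. By the slice theorem a small $H$-invariant ball $D\ni\tilde x$ with pairwise disjoint $\Gamma_p$-translates gives a chart $(D,H,\pi|_D)$ over the neighbourhood $\pi(D)$ of $x$; on the other side, $X$ carries a linear chart $(\rrr^n,\Gamma_x,\pi_x)$ via $\rho_x$ around $x$. One must produce a common refinement of these two charts with smooth transition embeddings, and for this the essential point is to identify the two local models at $x$: to show $H\cong\Gamma_x$ and that the slice representation of $H$ on $T_{\tilde x}\rrr^n$ is equivalent to $\rho_x$. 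Here the extremal-set hypothesis on $f$ is used together with Proposition \ref{p:extremalstrata}: each stratum closure $X^{\rho}$ is extremal, so $f$ carries the germ at $x$ of the stratification $\{X^{\rho}\cap U\}$ onto the germ at $q(\tilde x)$ of $\{T_pX^{\rho}\}$, and in the slice the latter is precisely the image under $q$ of the arrangement of fixed-point subspaces of the subgroups of $H$. A rigidity property of finite orthogonal quotients --- that the germ of $\rrr^n/H$ together with its extremal subsets determines $H$ up to conjugacy in $\on$ --- then yields the desired identification; alternatively one can set this up as an induction on $n$ through the spaces of directions, the link $\Sigma_pX\cong[\,S^{n-1}/\Gamma_p\,]$ supplied by the linear chart around $p$ being the base of the induction. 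With the models matched, the refining chart is the slice chart $(D',H,\pi|_{D'})$ over a smaller ball, its embedding into $\pi$ is the inclusion, and its embedding into $\pi_x$ is obtained by lifting $\pi|_{D'}$ through the orbifold covering $\pi_x$ over the regular part and checking that the lift is a smooth embedding; in producing the homeomorphism $f$ realising this matched data one uses the freedom in the choice of $f$ and, where necessary, Theorems \ref{t:relstability} and \ref{t:equistab} applied inside the charts.

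The main obstacle is exactly this smoothness: an arbitrary cone-homeomorphism $f$ only yields a topological chart, and upgrading it to a smooth chart compatible with the smooth atlas of $X$ forces one both to pin down the local representation at every point --- the rigidity input resting on Proposition \ref{p:extremalstrata} --- and to patch the resulting local chart-identifications into a single chart $(\rrr^n,\Gamma_p,\pi)$. For the latter the conical structure of $U$ is the key tool: a radial isotopy contracting $U$ onto $p$ lets one propagate the smooth identification outward from the linear chart around $p$, the flexibility in $f$ and the stability theorems being what is needed to keep the identification smooth along the way. Once that is done, $(\rrr^n,\Gamma_p,\pi)$ is a linear chart over $U$ around $p$, as required.
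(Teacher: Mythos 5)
Your approach diverges from the paper's, and the divergence exposes a genuine gap. You set out to build the chart directly as $(\rrr^n,\Gamma_p,\pi)$ with $\pi=f^{-1}\circ q$ and then to upgrade it from a topological to a smooth chart by a careful choice of $f$: you want local lifts of $\pi$ through the linear charts $\pi_x$ to come out as smooth embeddings, and you propose to secure this by exploiting ``the freedom in the choice of $f$'' together with Theorems~\ref{t:relstability} and~\ref{t:equistab} and a radial isotopy. But every tool you invoke produces only homeomorphisms. The Stability Theorem and its equivariant version live in the topological category; the freedom they offer in choosing the approximating map is freedom within the class of homeomorphisms $o(i)$--close to a given approximation, not freedom to land on a diffeomorphism. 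So the key step --- ``lifting $\pi|_{D'}$ through $\pi_x$ and checking that the lift is a smooth embedding'' --- is exactly where the argument breaks: with $f$ merely a cone-homeomorphism, $q\circ f^{-1}$ has no reason to admit a smooth lift, and no mechanism you cite can repair that. Your identification of the local groups $H\cong\Gamma_x$ and of the slice representations via the extremal stratification and Proposition~\ref{p:extremalstrata} is sound and is the same observation the paper makes (``$f$ preserves the local action at every point''), but that is a statement about isomorphism types of group actions, not about smoothness of transition maps, and it does not by itself produce compatibility with the smooth atlas.

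The paper sidesteps the smoothness problem entirely by never trying to make $f$ smooth. It uses $f$ only combinatorially: cover $U$ by ranges of linear charts from the existing smooth atlas, retain those whose $f$--images are ranges of linear charts in $T_pX$, and note that because $T_pX$ has the global linear chart $\rrr^n$, the charts over $T_pX$ assemble (with the right multiplicities $[\Gamma_p:N(\Gamma_i)]$) into $\rrr^n$ by explicit gluing. That gluing pattern is transferred via the bijection induced by $f$ to a gluing recipe for the corresponding smooth charts $\tilde U_i$ over $U$; since every transition map in the assembled chart is a transition embedding already present in the smooth atlas of $X$, compatibility is automatic. The price is that smoothness of the resulting $\tilde U$ is inherited piecewise from the atlas rather than imposed on a pre-built $\rrr^n$, but that is precisely what makes the proof go through. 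If you want to salvage your route, you would need an independent result that the cone-homeomorphism $f$ can be chosen to be a smooth orbifold map, and nothing in the paper (or the stability literature it cites) provides that.
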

\begin{proof}
Consider the differential of the local action of $\Gamma_p$ on $\rrr_n$.
The quotient of this action is the tangent cone at $p$, $T_p X$.

Let $f \co U \to T_p X$ be a homeomorphism carrying each extremal set $E$ in $U$ to $T_p E$.
Note that because $U$ is cone-like, $f$ preserves the local action at every point.

Using a maximal atlas, cover $U$ by the ranges of all possible linear charts, $\{U_{\kappa}\}_{\kappa \in K}$. 
Discard any $U_{\kappa}$ such that $f(U_{\kappa})$ is not the range of a linear chart in $T_p X$.

Observe that this reduced family still covers $U$. 
Suppose some $q \in U$ is not in any element of the reduced family. 
Then for every $\kappa \in K$ such that $q \in U_{\kappa}$, $f(U_{\kappa})$ is not a linear chart.
But $f(q)$ is covered by \emph{some} linear chart, and the intersection $W$ of the range  of this chart with $f(U_{\kappa})$ is also covered by a linear chart.
Then because $f^{-1}(W) \subset U_{\kappa}$ it too is covered by a linear chart.
It follows that $f^{-1}(W)=U_{\lambda}$ for some $\lambda \in K$, and is in the reduced family.

Select a countable subcover, $U_1, U_2, \ldots$, and write $V_i$ for $f(U_i)$.
Let $\Gamma_i$ be the local group acting on the charts $U_i$ and $V_i$.
The charts $\tilde{V}_1, \tilde{V}_2, \ldots$ can be glued together to construct a chart over all of $T_p X$.
The gluing requires $[\Gamma_p : N(\Gamma_i)]$ copies of $\tilde{V}_i$.
The manner of this gluing gives a set of instructions which allows one to glue the charts $\tilde{U}_1, \tilde{U}_2, \ldots$ together to obtain the desired chart $\tilde{U}$.

Since this chart is built by gluing together charts from the orbifold atlas, it is compatible with the atlas.
\end{proof}

\section{Proof of Main Theorem}

By the Stability Theorem \ref{t:relstability}, $\o$ contains only finitely many topological types. To prove the main theorem, it is therefore sufficient to prove the following.

\begin{theorem}Let $X$ be a compact topological space. Then, up to orbifold homeomorphism, there are only finitely many orbifold structures on $X$ which belong to $\o$.\end{theorem}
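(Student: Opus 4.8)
The plan is to argue by contradiction, using Gromov--Hausdorff compactness to produce a limit Alexandrov space, the relative Stability Theorem to transport a fixed stratified model onto every orbifold in a convergent sequence, and the equivariant Stability Theorem to promote the resulting stratum-preserving homeomorphism to an orbifold homeomorphism. So suppose $X$ admitted infinitely many pairwise non-orbifold-homeomorphic structures $O_1, O_2, \ldots$, all lying in $\o$. A lower sectional curvature bound together with an upper diameter bound makes this family Gromov--Hausdorff precompact, and the lower bound $v > 0$ on volume prevents collapse, so after passing to a subsequence the $O_i$ converge to an Alexandrov space $Y$ with $\dim Y = n$ and curvature $\geq k$.

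The first step is to bound the local groups. If $p$ is a point of an orbifold $O \in \o$ with local group $\Gamma_p$, a linear chart around $p$ identifies $T_p O$ with $\rrr^n/\Gamma_p$, so the volume ratio $\vol(B_r(p))/V_k(r)$ --- non-increasing in $r$ by Bishop--Gromov on the Alexandrov space $O$ --- tends to $1/|\Gamma_p|$ as $r \to 0$, where $V_k(r)$ is the volume of an $r$--ball in the $n$--dimensional model space of curvature $k$; taking $r = D \geq \diam O$ gives $\vol(O) \leq V_k(D)/|\Gamma_p|$, hence $|\Gamma_p| \leq N_0 := V_k(D)/v$. Since there are finitely many groups of order at most $N_0$, each with only finitely many $n$--dimensional real representations up to isomorphism, only finitely many local actions $\rho \co \Gamma_\rho \embed \on$ occur among orbifolds in $\o$; call this finite set $R$. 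For $O \in \o$ and $\rho \in R$, let $O^\rho$ be the closure of the set of points of $O$ with local action $\rho$; by Proposition \ref{p:extremalstrata} it is an extremal set.

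Because $R$ is finite, a further subsequence makes every $O_i^\rho$ converge, as $i \to \infty$, to an extremal set $Z^\rho \subset Y$. Applying the relative Stability Theorem \ref{t:relstability} to $O_i \to Y$ with the family $\{O_i^\rho\}_{\rho \in R} \to \{Z^\rho\}_{\rho \in R}$ yields, for all large $i$, homeomorphisms $h_i \co (Y, \{Z^\rho\}) \to (O_i, \{O_i^\rho\})$, so that $h_i^{-1}(O_i^\rho) = Z^\rho$ for every $\rho$, \emph{with the $Z^\rho$ not depending on $i$}. Next I would observe that the local group can only shrink when one passes to a nearby point, so that $q \in O^\rho$ forces $\Gamma_\rho$ to embed in $\Gamma_q$; in particular $|\Gamma_\rho| \leq |\Gamma_q|$, and equality forces $\rho$ to be the local action at $q$. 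Hence the local action at a point $q$ is the unique $\rho \in R$ with $q \in O^\rho$ for which $|\Gamma_\rho|$ is largest --- a formula depending only on the family of extremal sets $\{O^\rho\}$. Since $h_i^{-1}(O_i^\rho) = Z^\rho = h_j^{-1}(O_j^\rho)$, the composite $g_{ij} := h_j \circ h_i^{-1} \co O_i \to O_j$ therefore carries the $\rho$--stratum of $O_i$ onto the $\rho$--stratum of $O_j$ for every $\rho$, and in particular preserves local actions.

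The remaining step --- promoting $g_{ij}$ to an orbifold homeomorphism --- is the one I expect to be the main obstacle. Cover the compact space $O_i$ by finitely many cone-like open sets; by Proposition \ref{p:chart} each carries a linear chart for its representation in $R$, and, since $g_{ij}$ preserves local actions, the corresponding region of $O_j$ contains a cone-like set carrying a linear chart for the \emph{same} representation, so that over each chart the desired correspondence is the quotient of a $\Gamma$--equivariant homeomorphism of $\rrr^n$. I would assemble these local equivariant homeomorphisms into a global one by induction along the stratification --- from the deepest strata (which are extremal sets, on which $g_{ij}$ already restricts to a homeomorphism) outward --- extending over one cone-like neighbourhood at a time without disturbing what has been built. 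The crucial technical input is the lifting construction underlying Theorem \ref{t:equistab}: on a chart $\rrr^n$ one needs an equivariant homeomorphism matching the already-built map on the preimage of the overlap, and this can be produced by lifting a homeomorphism of orbit spaces chosen to extend the corresponding map there, using the flexibility noted after that theorem. Making this induction, and the relative and equivariant bookkeeping it requires, precise is the technical heart of the argument. Once an orbifold homeomorphism $O_i \to O_j$ has been obtained for all large $i, j$, the $O_i$ are eventually mutually orbifold-homeomorphic, contradicting the choice of the sequence, and the theorem follows.
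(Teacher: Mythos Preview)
Your proposal tracks the paper's argument closely through the extraction of a non-collapsing convergent subsequence, the finiteness of possible local actions, the convergence of the extremal strata $O_i^\rho$, and the application of the relative Stability Theorem to obtain stratum-preserving homeomorphisms $h_i \co Y \to O_i$. Your explicit observation that the local action at a point is recoverable from the family $\{O^\rho\}$ is a nice way to see that $g_{ij} = h_j \circ h_i^{-1}$ preserves local actions; the paper leaves this implicit.

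The substantive divergence is in the final step, which you correctly identify as the crux but leave as a sketch. Your plan is to lift $g_{ij}$ directly, chart by chart, via an induction along the stratification, matching on overlaps using the lifting flexibility behind Theorem~\ref{t:equistab}. The paper avoids this gluing entirely. Instead it covers the \emph{limit} $Y$ (not $O_i$) by cone-like balls $U^\alpha$; since the $h_i$ preserve extremal sets, $U_i^\alpha := h_i(U^\alpha)$ is cone-like in $O_i$ and hence, by Proposition~\ref{p:chart}, carries a linear chart $(\tilde U_i^\alpha, \Gamma_{p_i^\alpha}, \pi)$. After a further subsequence these charts converge in the pointed equivariant Gromov--Hausdorff sense to some $(\tilde U^\alpha, \Gamma_{p^\alpha})$, and Theorem~\ref{t:equistab} produces equivariant homeomorphisms $F_i \co \tilde U^\alpha \to \tilde U_i^\alpha$. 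The key manoeuvre is then to \emph{adjust} the $F_i$: the induced maps $f_i$ on orbit spaces, composed with $h_i^{-1}$, converge to an isometry $\phi \co \tilde U^\alpha/\Gamma_{p^\alpha} \to U^\alpha$, and one resets $f_i := (\mu_i^\alpha)^{-1}\circ h_i \circ \phi$ and lifts again. This makes $(\tilde U^\alpha,\Gamma_{p^\alpha},\phi)$ a (non-smooth) orbifold chart on $U^\alpha \subset Y$ for which $h_i$ is, by construction, covered by the equivariant $F_i$ --- so each $h_i$ is an orbifold homeomorphism, and hence so is $g_{ij}$.

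What this buys over your sketch is that no inductive gluing of equivariant lifts over overlaps is needed: the limit space $Y$ serves as a single common model, and the adjustment of $F_i$ forces the diagram to commute with the already-given $h_i$, so compatibility between charts is inherited automatically from that of the $h_i$. Your route, by contrast, must confront the genuine difficulty of extending an equivariant homeomorphism of $\rrr^n$ prescribed on a $\Gamma$-invariant subset, coherently across many charts; Theorem~\ref{t:equistab} as stated gives equivariant homeomorphisms along a convergent sequence, not lifts of a prescribed map between two fixed terms, so invoking it in your pairwise setting would itself need justification.
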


\begin{proof}
Aiming for a contradiction, let $O_i$ be a sequence of orbifolds in $\o$, all of which have underlying topological space $X$, and no two of which are orbifold homeomorphic. 
By compactness of $\a$, a subsequence of $O_i$ converges in the Gromov--Haudorff sense to some $Y \in \a$ which also has underlying space $X$. 
Abusing notation, the subsequence will still be written as $O_i$.
Since there will be many more instances of passing to subsequences, this abuse of notation will be repeated throughout the proof.

By Stanhope \cite{stan} there is a uniform upper bound on the order of the local group of a point in $\o$. 
Recall a finite group has only finitely many linear representations in a given dimension.
It follows that all the possible local actions up to isomorphism can be listed by $\rho_j \co G_j \to \mathrm{GL}(n)$, for $j = 1 , \ldots , m$ where $m$ is some finite number.
Let $E_i^j$ be $O_i^{\rho_j}$, the closure of the subset of $O_i$ with local group action isomorphic to $\rho^j$.
By Proposition \ref{p:extremalstrata} the $E_i^j$ are extremal sets.

Passing to a subsequence $m$ times if necessary, one may assume that each sequence $E_i^j$ converges to an extremal subset $E^j \subset Y$. 
Now, by the relative stability theorem \cite{HS}, there are homeomorphisms $h_i \co Y \to O_i$ which are Gromov--Hausdorff approximations and carry each of the $E^j$ onto the $E_i^j$. 

To prove the result, it is now sufficient to show that $h_{ij} \co O_i \to O_j$ given by $h_{ij} = h_j \circ h_i^{-1}$ is an orbifold homeomorphism.

Let $\pal$ be a set of points in $Y$ such that $Y$ is covered by cone-like metric balls $\ual$ centered at $\pal$. 
Then the sets $h_i (\ual)$ are also cone-like around $\pali = h_i (\pal)$, and cover $O_i$. 
Denote these sets by $\uali$.

By Proposition \ref{p:chart} each $\uali$ is covered by a chart $(\cali,\Gamma_{\pali},\pi_{\uali})$. 
By passing to a subsequence, we may assume that the $\cali$ form a convergent sequence in the pointed equivariant Gromov--Hausdorff topology, converging to some object $(\cal,\Gamma_{\pal}) \in \mathcal{M}^c_{eq}$.

Now, by Theorem \ref{t:equistab}, $\cali$ and $\cal$ are equivariantly homeomorphic by some $F_i \co \cal \to \cali$. 
The $F_i$ induce homeomorphisms  $f_i \co \cali / \Gamma_{\pali} \to \cal / \Gamma_{\pal}$ which are Hausdorff approximations witnessing the Hausdorff convergence of the orbit spaces inside the enveloping orbit space. 

Write $\mu\ali$ for the isometry $\cali / \Gamma_{\pali} \to \uali$ induced by $\pi_{\uali}$.

\begin{center}
\begin{tikzcd}
	\cali 					\arrow[two heads]{d}	& \cal \arrow{l}{F_i}[swap]{\cong} \arrow[two heads]{d} \\
	\cali / \Gamma_{\pali}	\arrow{d}{\mu\ali}[anchor=center,rotate=-90,yshift=-1ex,xshift=-0.2ex]{\cong}	& \cal / \Gamma_{\pal} \arrow{l}{f_i}[swap]{\cong}\\
	\uali											& \ual \arrow{l}{h_i}[swap]{\cong}
\end{tikzcd}
\end{center}

Now the gap may be filled in by a homeomorphism $\phi_i \co \cal / \Gamma_{\pal} \to \ual$ given by $h_i ^{-1} \circ \mu\ali \circ f_i$. The $\phi_i$ make up a sequence of Gromov--Hausdorff approximations, and the sequence converges to some isometry $\phi \co \cal / \Gamma_{\pal} \to \ual$. Then the $f_i$ may be adjusted slightly, setting $f_i = (\mu\ali)^{-1} \circ h_i \circ \phi$, and the $F_i$ adjusted to induce the new $f_i$.

This gives a non-smooth orbifold chart over $\ual$, $(\cal, \Gamma_{\pal}, \phi)$ such that the $h_i \co \ual \to \uali$ are orbifold homeomorphisms. The maps $h_{ij}$ are then also orbifold homeomorphisms.
\end{proof}

\FloatBarrier

\bibliographystyle{habbrv}
\bibliography{orbifold}

\end{document}